\documentclass[10pt]{amsart}
\usepackage{amssymb}
\usepackage{epsfig}
\usepackage{url}
\usepackage{setspace}
\usepackage{color}
\theoremstyle{plain}

%\textwidth=480pt \textheight=580pt \hoffset=-45pt \voffset=-30pt
%\parindent=0em
%\parskip=6pt
%\raggedbottom

\newtheorem{thm}{Theorem}[section]
\newtheorem{cor}[thm]{Corollary}
\newtheorem{lem}[thm]{Lemma}

\newtheorem{rem}[thm]{Remark}
\newtheorem{ques}[thm]{Question}
\newtheorem{conj}[thm]{Conjecture}
\newtheorem{prob}[thm]{Problem}

\def\cal{\mathcal}
\def\bbb{\mathbb}
\def\op{\operatorname}
\renewcommand{\phi}{\varphi}
\newcommand{\R}{\bbb{R}}
\newcommand{\N}{\bbb{N}}
\newcommand{\Z}{\bbb{Z}}

\newcommand{\C}{\bbb{C}}

\makeatletter
\let\@@pmod\pmod
\DeclareRobustCommand{\pmod}{\@ifstar\@pmods\@@pmod}
\def\@pmods#1{\mkern4mu({\operator@font mod}\mkern 6mu#1)}
\makeatother

\begin{document}
\title[Signs behaviour of sums of weighted numbers of partitions]{Signs behaviour of sums of weighted numbers of partitions}
\author{Filip Gawron and Maciej Ulas}

\keywords{sums involving partitions; sign} \subjclass[2010]{}
\thanks{Research of the authors was supported by a grant of the National Science Centre (NCN), Poland, no. UMO-2019/34/E/ST1/00094}

\begin{abstract}
Let $A$ be a subset of positive integers. By $A$-partition of $n$ we understand the representation of $n$ as a sum of elements from the set $A$. For given $i, n\in\N$, by $c_{A}(i,n)$ we denote the number of $A$-partitions of $n$ with exactly $i$ parts. In the paper we obtain several result concerning sign behaviour of the sequence $S_{A,k}(n)=\sum_{i=0}^{n}(-1)^{i}i^{k}c_{A}(i,n)$, where $k\in\N$ is fixed. In particular, we prove that for a broad class $\cal{A}$ of subsets of $\N_{+}$ we have that for each $A\in \cal{A}$ we have $(-1)^{n}S_{A,k}(n)\geq 0$ for each $n, k\in\N$.
\end{abstract}

\maketitle

\section{Introduction}\label{sec1}

Let $\N$ and $\N_{+}$ denotes the set of non-negative integers and the set of positive integers, respectively. Let $A\subset\N_{+}$ be given and take $n\in\N$. By an $A$-partition $\lambda=(\lambda_{1},\ldots, \lambda_{k})$  with parts in $A$, of a non-negative integer $n$, we mean a representation of $n$ in the form
$$
n=\lambda_{1}+\ldots+\lambda_{k},
$$
where $\lambda_{i}\in A$. The representations of $n$ differing only in order of the terms are counted as one. By $p_{A}(n)$ we denote the number of all $A$-partitions of $n$. We also use the standard convention $p_{A}(0)=1$. The famous partition function $p(n):=p_{\N_{+}}(n)$ was introduced by Euler. However, a true explosion of works dedicated to the sequence $(p(n))_{n\in\N}$ was initiated by Ramanujan and his collaborations with Hardy. The literature concerning various arithmetic properties of $p(n)$ and others partition functions associated with specific subsets of $\N_{+}$ is enormous. The standard reference covering various aspects of the theory partitions is the monograph of Andrews \cite{And0}.

The common starting point in investigations of the sequence $(p_{A}(n))_{n\in\N}$ is the ordinary  generating function
$$
\sum_{n=0}^{\infty}p_{A}(n)x^{n}=\prod_{a\in A}\frac{1}{1-x^{a}}.
$$
The above identity can be seen twofold: as a identity in the formal power series ring and also as a equality of two analytic functions (in the disc $\{z\in\C:\;|z|<1\}$) represented by corresponding sides.

For given $A\subset \N_{+}$ let us write
$$
F_{A}(t,x)=\prod_{a\in A}\frac{1}{1-tx^{a}}=\sum_{n=0}^{\infty}f_{A,n}(t)x^{n}
$$
and observe that the polynomial $f_{A,n}(t)$ is a natural generalization of the number $p_{A}(n)=f_{A,n}(1)$ which counts the number of $A$-partitions of the number $n$.

Let us write
$$
f_{A,n}(t)=\sum_{i=0}^{n}c_{A}(i,n)t^{i},
$$
and note that $c_{A}(i,n)$ is the number of $A$-partitions of $n$ with exactly $i$ parts. In particular $f_{A,n}(1)=\sum_{i=0}^{n}c_{A}(i,n)$ is the number of $A$-partitions of $A$. In the sequel, we will call the polynomial $f_{A,n}(t)$ as a $n$-th $A$-partition polynomial.

In a recent paper \cite{UZ} the case of $A$-partitions polynomials for $A=\{2^{i}:\;i\in\N\}$ were investigated. Among many properties of the $A$-partition polynomials, the authors proved that for $k=0, 1$, the expression $(-1)^{n}\sum_{i=0}^{n}(-1)^{i}i^{k}c_{A}(i,n)$ is non-negative. Moreover, the conjecture was stated that for each $k\in\N$ this expression is non-negative. This suggest the following general question.

\begin{ques}
Let $A\subset\N_{+}$ and $k\in\N$ be fixed. What can be said about the sign behaviour of the sequence
$$
S_{A,k}(n)=\sum_{i=0}^{n}(-1)^{i}i^{k}c_{A}(i,n)?
$$
In particular, under which conditions on $A$ we have that $(-1)^{n}S_{A,k}(n)\geq 0$ for given $k$ and all but finitely many $n$?
\end{ques}

Let us also note that the value $S_{A,0}(n)=\sum_{i=0}^{n}(-1)^{i}c_{A}(i,n)$ has a nice combinatorial interpretation as the difference between the number of partitions of $n$ with even number of parts and odd number of parts. One can think that in general the sign behaviour of $S_{A,0}(n)$ should be alternating, that means that $(-1)^{n}S_{A,0}(n)\geq 0$. The aim of this paper is to investigate this expectation and prove several results which in many cases allow the proof of the non-negativity of $(-1)^{n}S_{A,k}(n)$ for a broad class of sets $A$ and all $k, n\in\N$. However, as we will see this is not true in general. Our investigations fits in various studies concerning behaviour of various counting objects involving parts of partitions (see \cite{Mer}).

Let us describe the content of the paper in some details. In Section \ref{sec2} we collect some basic observations concerning the sequence $(S_{A,k}(n))_{n\in\N}$. In particular, by investigating so called $\delta$ operator we introduce two family of polynomials $p_{A,n}(t), q_{A,n}(t)$ closely related to the logarithmic derivative of the generating function of $A$-partition polynomials and obtain recurrence relations satisfied by terms of the double sequence $(S_{A,k}(n))_{k, n\in\N}$.
In Section \ref{sec3} we obtain several results which allow the proof of nonnegativity of $S_{A,k}(n)$ for each $k$ and $n$ and several sets $A$. In particular, if $A$ contains only odd numbers then $(-1)^{n}S_{A,k}(n)\geq 0$. The same statement is true for the set $A=\N_{+}$ and many others. Finally, in the last section we state several problems and conjectures which appeared during our investigations and we were unable to solve. We hope that this collection of problems will stimulate further activity in this area.

\section{Basic observations}\label{sec2}

In order to investigate the sign behaviour of the sequence $(S_{A,k}(n))_{n\in\N}$, where $k\in\N$ is fixed, we consider the differential operator $\delta:\;\R[t]\rightarrow \R[t]$ defined in the following way: for $f\in\R[t]$ we put
$$
\delta(f(t))=t\frac{df(t)}{dt}.
$$
For $k=0$ we put $\delta^{(0)}(f)=f$ and for $k\in\N_{+}$ we define by induction a $k$-th power of the operator $\delta$ as $\delta^{(k)}(f)$, where
$$
\delta^{(k)}(f)=\delta(\delta^{(k-1)}(f)).
$$
The basic properties of the operator $\delta$ can be summarized in the following well known

\begin{lem}\label{delta}
\begin{enumerate}
\item $\delta$ is linear operator, i.e., for all $u, v\in\R, f, g\in\R[t]$ we have $\delta(uf+vg)=u\delta(f)+v\delta(g)$;
\item if $f\in\R[t]\setminus\R$ then $\op{deg}\delta(f)=\op{deg}f$;
\item for $k\in\N$ we have
$$
\delta^{(k)}(fg)=\sum_{i=0}^{k}\binom{k}{i}\delta^{(i)}(f)\delta^{(k-i)}(g).
$$
\item if $f(t)=\sum_{i=0}^{n}a_{i}t^{i}\in\R[t]$ and $k\in\N$ is given then
$$
\delta^{(n)}(f(t))=\sum_{i=0}^{n}a_{i}i^{n}t^{i}
$$
\end{enumerate}
\end{lem}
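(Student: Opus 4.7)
The four claims are standard facts about the Euler operator $\delta = t\,d/dt$, and my plan is to dispatch them in the order listed, leaning on the observation that $\delta$ is a derivation whose eigenvectors are precisely the monomials.

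For (1), linearity of $\delta$ is an immediate consequence of linearity of $d/dt$ together with distributivity of multiplication by $t$: if $u,v\in\R$ and $f,g\in\R[t]$ then $\delta(uf+vg)=t(uf+vg)'=u\cdot tf'+v\cdot tg'=u\delta(f)+v\delta(g)$. For (2), I would write $f(t)=\sum_{i=0}^{n}a_{i}t^{i}$ with $a_{n}\neq 0$ and $n=\op{deg}f\geq 1$; a direct computation gives $\delta(f)=\sum_{i=1}^{n}ia_{i}t^{i}$, whose leading coefficient $na_{n}$ is nonzero since $n\geq 1$, so $\op{deg}\delta(f)=n=\op{deg}f$.

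For (3), the key preliminary observation is that $\delta$ is a derivation on $\R[t]$: indeed, for any $f,g\in\R[t]$,
\begin{equation*}
\delta(fg)=t(fg)'=t(f'g+fg')=(tf')g+f(tg')=\delta(f)g+f\delta(g).
\end{equation*}
From here I would proceed by induction on $k$, with the case $k=0$ being trivial and $k=1$ being the Leibniz identity just verified. For the inductive step, applying $\delta$ to the hypothesized expansion of $\delta^{(k)}(fg)$ and using linearity together with the $k=1$ case yields
\begin{equation*}
\delta^{(k+1)}(fg)=\sum_{i=0}^{k}\binom{k}{i}\bigl(\delta^{(i+1)}(f)\delta^{(k-i)}(g)+\delta^{(i)}(f)\delta^{(k-i+1)}(g)\bigr);
\end{equation*}
after reindexing and collecting terms, Pascal's identity $\binom{k}{i-1}+\binom{k}{i}=\binom{k+1}{i}$ converts this into the desired formula for $k+1$. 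This is the only step that requires an actual manipulation, and so is the main (albeit routine) obstacle.

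For (4), I would exploit the eigen-structure of $\delta$. Since $\delta(t^{i})=t\cdot it^{i-1}=it^{i}$, each monomial $t^{i}$ is an eigenvector of $\delta$ with eigenvalue $i$; iterating gives $\delta^{(k)}(t^{i})=i^{k}t^{i}$. Applying $\delta^{(k)}$ term by term to $f(t)=\sum_{i=0}^{n}a_{i}t^{i}$ via the linearity established in (1) then produces $\delta^{(k)}(f(t))=\sum_{i=0}^{n}a_{i}i^{k}t^{i}$, which is the claimed identity (with the convention $0^{0}=1$ handling the $i=0$ term when $k=0$).
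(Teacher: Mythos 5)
Your proposal is correct and follows exactly the route the paper sketches: linearity and the degree claim from the definition, the product formula via the Leibniz rule and induction on $k$ (with Pascal's identity), and the last claim from linearity together with $\delta^{(k)}(t^{i})=i^{k}t^{i}$; the paper simply omits these routine details. You also correctly read the intended statement of part (4), where the paper's displayed formula has $n$ where $k$ is meant.
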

\begin{proof}
The first two properties are easy consequences of the definition of the operator $\delta$. The proof of the third property is just the application of the Leibnitz rule together with induction on $k$. The last property follows from the linearity of the operator $\delta$ and the equality $\delta^{(k)}(t^{i})=i^{k}t^{i}$. We omit the simple details.
\end{proof}

As an immediate consequence of the fourth property of the operator $\delta$ we get the equality
$$
S_{A,k}(n)=\sum_{i=0}^{n}(-1)^{i}i^{k}c_{A}(i,n)=\delta^{(k)}(f_{A,n}(t))\mid_{t=-1}.
$$

In order to simplify the notation for a given $A\subset\N_{+}$ and $n\in\N_{+}$ we write
$$
A(n):=\{d\in A:\;d|n\},
$$
i.e., the set $A(n)$ contains those divisors of $n$ which are lying in $A$.

We prove certain identities which will be useful in our investigations and allow to express action of $\delta$ operator on $A$-partition polynomial in terms of $A$-partitions polynomials and a related sequence of polynomials. More precisely, we prove the following.

\begin{lem}\label{pqrelations}
Let $n\in\N$. Then
\begin{align*}
\sum_{i=1}^{n}p_{A,i}(t)f_{A,n-i}(t)&=tf_{A,n}'(t)=\delta(f_{A,n}(t)),\\
\sum_{i=1}^{n}q_{A,i}(t)f_{A,n-i}(t)&=nf_{A,n}(t),\\
tq'_{A,n}(t)&=np_{A,n}(t),
\end{align*}
where
$$
p_{A,n}(t)=\sum_{a\in A(n)}t^{n/a},
$$
and
$$
q_{A,n}(t)=\sum_{a\in A(n)}at^{n/a}.
$$
\end{lem}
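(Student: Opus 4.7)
My plan is to extract all three identities from the logarithmic derivative of the generating function $F_A(t,x)=\prod_{a\in A}(1-tx^a)^{-1}$, working in the formal power series ring $\R[t][[x]]$, and then to derive the last identity directly from the definitions of $p_{A,n}$ and $q_{A,n}$.

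First, I would take the formal logarithm
\begin{equation*}
\log F_A(t,x)=-\sum_{a\in A}\log(1-tx^a)=\sum_{a\in A}\sum_{k\geq 1}\frac{t^k x^{ak}}{k}.
\end{equation*}
Applying the operator $t\partial_t$ gives
\begin{equation*}
\frac{t\,\partial_t F_A}{F_A}=\sum_{a\in A}\sum_{k\geq 1} t^k x^{ak}=\sum_{n\geq 1}\Bigl(\sum_{a\in A,\,a\mid n}t^{n/a}\Bigr)x^n=\sum_{n\geq 1}p_{A,n}(t)\,x^n,
\end{equation*}
after grouping by $n=ak$. Rearranging $t\,\partial_t F_A = F_A\cdot\sum_{n\geq 1}p_{A,n}(t)x^n$ and comparing the coefficient of $x^n$ on both sides (noting that the $x^n$-coefficient of $t\,\partial_t F_A$ is $tf_{A,n}'(t)=\delta(f_{A,n}(t))$), I obtain the first identity.

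For the second identity, I would apply $x\,\partial_x$ to the same logarithm. This yields
\begin{equation*}
\frac{x\,\partial_x F_A}{F_A}=\sum_{a\in A}\sum_{k\geq 1}\frac{t^k\cdot ak\cdot x^{ak}}{k}=\sum_{a\in A}\sum_{k\geq 1}a\,t^k x^{ak}=\sum_{n\geq 1}q_{A,n}(t)\,x^n,
\end{equation*}
again by regrouping by $n=ak$. The $x^n$-coefficient of $x\,\partial_x F_A$ is $n\,f_{A,n}(t)$, and comparing with the Cauchy product on the right gives the second identity.

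The third identity requires no generating function at all: differentiating $q_{A,n}(t)=\sum_{a\in A(n)}at^{n/a}$ termwise yields
\begin{equation*}
q'_{A,n}(t)=\sum_{a\in A(n)}a\cdot\tfrac{n}{a}\,t^{n/a-1}=n\sum_{a\in A(n)}t^{n/a-1},
\end{equation*}
so multiplying by $t$ gives $tq'_{A,n}(t)=n\,p_{A,n}(t)$. There is no serious obstacle here; the only care needed is in justifying the formal manipulations of the logarithm (all the series involved lie in $x\R[t][[x]]$, so $\log$ and its derivative are well defined as formal power series), and in correctly reindexing the double sum by $n=ak$ so that each $a\in A$ with $a\mid n$ contributes once to the $x^n$-coefficient.
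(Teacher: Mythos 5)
Your proposal is correct and follows essentially the same route as the paper: both extract the first two identities from the logarithmic derivatives $t\partial_t\log F_A$ and $x\partial_x\log F_A$ and compare coefficients of $x^n$, and both obtain the third identity by direct termwise differentiation. The only cosmetic difference is that you expand $\log(1-tx^a)$ as an explicit double series and regroup by $n=ak$, whereas the paper writes the same sums in closed form as $\sum_{a}\frac{tx^{a}}{1-tx^{a}}$ and $\sum_{a}\frac{atx^{a}}{1-tx^{a}}$.
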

\begin{proof}
Applying logarithmic differentiation with respect to the variable $t$, we get the following expansion
$$
\frac{t}{F_{A}(t,x)}\frac{\partial F_{A}(t,x)}{\partial t}=t\frac{\partial}{\partial t}(\log F_{A}(t,x))=\sum_{a\in A}\frac{tx^{a}}{1-tx^{a}}=\sum_{n=1}^{\infty}p_{A,n}(t)x^{n}.
$$
Thus, the first identity is a consequence of the equality
$$
t\frac{\partial F_{A}(t,x)}{\partial t}=\left(\sum_{n=1}^{\infty}p_{A,n}(t)x^{n}\right)F_{A}(t,x)=\sum_{n=0}^{\infty}\left(\sum_{i=1}^{n}p_{A,i}(n)f_{A,n-i}(t)\right)x^{n}
$$
by a comparison of coefficients near $x^{n}$ on far ends.

Similarly, applying logarithmic differentiation with respect to the variable $x$, we get
$$
\frac{x}{F_{A}(t,x)}\frac{\partial F_{A}(t,x)}{\partial x}=x\frac{\partial}{\partial x}(\log F_{A}(t,x))=\sum_{a\in A}\frac{atx^{a}}{1-tx^{a}}=\sum_{n=1}^{\infty}q_{A,n}(t)x^{n},
$$
and thus, form the equality
$$
x\frac{\partial F_{A}(t,x)}{\partial x}=\left(\sum_{n=1}^{\infty}q_{A,n}(t)x^{n}\right)F_{A}(t,x)=\sum_{n=0}^{\infty}\left(\sum_{i=1}^{n}q_{A,i}(n)f_{A,n-i}(t)\right)x^{n}
$$
and comparison of coefficients, we get the second identity form the statement.

The third identity is a direct consequence of the from of the polynomials $p_{A,n}, q_{A,n}$.

\end{proof}

\begin{rem}
{\rm Let us note that the polynomial $p_{A,n}(t)$ can be seen as a polynomial generalization of the number of $A$-divisors function $\tau_{A}(n)=\sum_{d\in A(n)}1=p_{A,n}(1)$. Similarly, the polynomial $q_{A,n}(t)$ can be seen as a polynomial analogue of the sum of $A$-divisors function $\sigma_{A}(n)=\sum_{d\in A(n)}d=q_{A,n}(1)$. }
\end{rem}

As an application of the above result we prove a double recurrence relation satisfied by the sequence $(S_{A,k}(n))_{k,n}$. More precisely, we have the following.

\begin{cor}\label{relationS}
Let $k\in\N$. We have the following identities
$$
S_{A,k+1}(n)=\sum_{i=1}^{n}\sum_{j=0}^{k}\binom{k}{j}\delta^{(j)}(p_{A,i}(t))\mid_{t=-1}S_{A,k-j}(n-i).
$$
and
$$
\delta^{(k)}(p_{A,n}(t))=\sum_{a\in A(n)}\left(\frac{n}{a}\right)^{k}t^{\frac{n}{a}}.
$$
\end{cor}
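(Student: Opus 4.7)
The plan is to derive both identities directly from the tools already in place: Lemma \ref{pqrelations} (in particular the relation $\delta(f_{A,n}(t))=\sum_{i=1}^{n}p_{A,i}(t)f_{A,n-i}(t)$), Lemma \ref{delta} (especially the Leibniz rule for $\delta$ and the closed form $\delta^{(k)}(t^i)=i^k t^i$), and the identification $S_{A,k}(n)=\delta^{(k)}(f_{A,n}(t))\mid_{t=-1}$ noted right after Lemma \ref{delta}.

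For the second identity, I would simply apply property (4) of Lemma \ref{delta} to the definition $p_{A,n}(t)=\sum_{a\in A(n)}t^{n/a}$. Since $\delta^{(k)}$ acts on each monomial $t^{n/a}$ by multiplication with $(n/a)^k$, linearity gives
$$
\delta^{(k)}(p_{A,n}(t))=\sum_{a\in A(n)}\left(\frac{n}{a}\right)^{k}t^{n/a}
$$
immediately. There is no real obstacle here; it is a direct instantiation of Lemma \ref{delta}(4).

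For the first identity I would start from the equation
$$
\delta(f_{A,n}(t))=\sum_{i=1}^{n}p_{A,i}(t)f_{A,n-i}(t)
$$
given by Lemma \ref{pqrelations} and apply $\delta^{(k)}$ to both sides. On the left, this yields $\delta^{(k+1)}(f_{A,n}(t))$, which after substituting $t=-1$ is exactly $S_{A,k+1}(n)$. On the right, I would expand each product $p_{A,i}(t)f_{A,n-i}(t)$ via the Leibniz rule of Lemma \ref{delta}(3), obtaining
$$
\delta^{(k)}\!\left(p_{A,i}(t)f_{A,n-i}(t)\right)=\sum_{j=0}^{k}\binom{k}{j}\delta^{(j)}(p_{A,i}(t))\,\delta^{(k-j)}(f_{A,n-i}(t)).
$$
Evaluating at $t=-1$, the factor $\delta^{(k-j)}(f_{A,n-i}(t))\mid_{t=-1}$ becomes $S_{A,k-j}(n-i)$, and summing over $i$ produces the claimed formula.

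The only mild subtlety is making sure that the application of $\delta^{(k)}$ term by term is legitimate; this is immediate from the linearity of $\delta$ asserted in Lemma \ref{delta}(1) and the finiteness of the sum $\sum_{i=1}^n p_{A,i}(t)f_{A,n-i}(t)$. I do not foresee any genuine obstacle — the result is essentially a bookkeeping exercise combining the logarithmic-derivative identity with the Leibniz rule for $\delta$, so the proof will be short.
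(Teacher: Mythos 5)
Your proposal is correct and follows essentially the same route as the paper: apply $\delta^{(k)}$ to the identity $\delta(f_{A,n}(t))=\sum_{i=1}^{n}p_{A,i}(t)f_{A,n-i}(t)$ from Lemma \ref{pqrelations}, expand via the Leibniz rule of Lemma \ref{delta}(3), and evaluate at $t=-1$, while the second identity is a direct instance of Lemma \ref{delta}(4). No gaps.
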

\begin{proof}
As a consequence of the first identity from Lemma \ref{pqrelations} we get the following chain of identities
\begin{align*}
S_{A,k+1}(n)&=\delta^{(k)}(\delta(f_{A,n}(t)))=\delta^{(k)}(tf_{A,n}'(t))\mid_{t=-1}\\
         &=\delta^{(k)}\left(\sum_{i=1}^{n}p_{A,i}(t)f_{A,n-i}(t)\right)\mid_{t=-1}\\
         &=\sum_{i=1}^{n}\delta^{(k)}(p_{A,i}(t)f_{A,n-i}(t))\mid_{t=-1}\\
         &=\sum_{i=1}^{n}\sum_{j=0}^{k}\binom{k}{j}\delta^{(j)}(p_{A,i}(t))\mid_{t=-1}\delta^{(k-j)}(f_{A,n-i}(t))\mid_{t=-1}\\
         &=\sum_{i=1}^{n}\sum_{j=0}^{k}\binom{k}{j}\delta^{(j)}(p_{A,i}(t))\mid_{t=-1}S_{A,k-j}(n-i)
\end{align*}
and hence the result.

The second identity from the statement is a simple consequence of the last property from Lemma \ref{delta}.

\end{proof}

\section{Results}\label{sec3}

In this section we prove several results which allow to prove non-negativity of $(-1)^{n}S_{A,k}(n)$ for a broad class of sets $A$. Moreover, we show that the sequence $(\op{sign}(S_{A,k}(n)))_{n\in\N}$, where $A=\{a,b\}, a<b, \gcd(a,b)=1$, is periodic of period $2a(b-a)$.

We start with the following result.

\begin{lem}\label{twosets}
Let $A_{1}, A_{2}\subset\N_{+}$ and suppose that $A_{1}\cap A_{2}=\emptyset$. If for each $k\in\N$ we have $(-1)^{n}S_{A_{i},k}(n)\geq 0$ for $i=1, 2$, then $(-1)^{n}S_{A_{1}\cup A_{2},k}(n)\geq 0$.
\end{lem}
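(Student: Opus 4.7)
The plan is to exploit the multiplicativity of the generating function $F_{A}(t,x)$ with respect to disjoint unions of index sets, and then unpack $\delta^{(k)}$ via the Leibniz rule already recorded in Lemma \ref{delta}(3).

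First, since $A_{1}\cap A_{2}=\emptyset$, the definition of $F_{A}$ yields the factorization
$$
F_{A_{1}\cup A_{2}}(t,x)=\prod_{a\in A_{1}\cup A_{2}}\frac{1}{1-tx^{a}}=F_{A_{1}}(t,x)F_{A_{2}}(t,x).
$$
Expanding both sides as power series in $x$ and comparing coefficients gives the convolution
$$
f_{A_{1}\cup A_{2},n}(t)=\sum_{i=0}^{n}f_{A_{1},i}(t)f_{A_{2},n-i}(t).
$$

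Next, I would apply $\delta^{(k)}$ to this identity term by term, using Lemma \ref{delta}(3) to expand each product, and then evaluate at $t=-1$. Writing $S_{A_{i},j}(m)=\delta^{(j)}(f_{A_{i},m}(t))\mid_{t=-1}$, this produces
$$
S_{A_{1}\cup A_{2},k}(n)=\sum_{i=0}^{n}\sum_{j=0}^{k}\binom{k}{j}S_{A_{1},j}(i)\,S_{A_{2},k-j}(n-i).
$$

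Finally I would multiply both sides by $(-1)^{n}$ and split the sign as $(-1)^{n}=(-1)^{i}(-1)^{n-i}$, giving
$$
(-1)^{n}S_{A_{1}\cup A_{2},k}(n)=\sum_{i=0}^{n}\sum_{j=0}^{k}\binom{k}{j}\bigl((-1)^{i}S_{A_{1},j}(i)\bigr)\bigl((-1)^{n-i}S_{A_{2},k-j}(n-i)\bigr).
$$
By the hypothesis applied to $A_{1}$ (with exponent $j$) and to $A_{2}$ (with exponent $k-j$), every bracketed factor is $\geq 0$, and the binomial weights are positive, so the whole sum is non-negative. No step here looks hard; the only thing to be careful about is applying the hypothesis at every exponent $j\in\{0,\ldots,k\}$, not just at the fixed $k$, which is why it is essential that the assumption in the statement is quantified over all $k\in\N$ rather than one fixed value.
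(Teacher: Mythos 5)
Your proposal is correct and follows essentially the same route as the paper's own proof: factor $F_{A_{1}\cup A_{2}}=F_{A_{1}}F_{A_{2}}$ using disjointness, convolve, apply the Leibniz rule for $\delta^{(k)}$ from Lemma \ref{delta}, evaluate at $t=-1$, and split $(-1)^{n}=(-1)^{i}(-1)^{n-i}$. Your closing remark about needing the hypothesis for all exponents $j\leq k$ is a worthwhile point that the paper leaves implicit.
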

\begin{proof}
We are interesting in the value of
$$
S_{A_{1}\cup A_{2},k}(n)=\delta^{(k)}(f_{A_{1}\cup A_{2},n}(t))\mid_{t=-1}.
$$
Because $A_{1}\cap A_{2}=\emptyset$ we have the identity $F_{A_{1}\cup A_{2}}(t,x)=F_{A_{1}}(t,x)F_{A_{2}}(t,x)$. As a consequence we deduce that
$$
f_{A_{1}\cup A_{2}, n}(t)=\sum_{i=0}^{n}f_{A_{1},i}(t)f_{A_{2},n-i}(t).
$$
Thus, from Lemma \ref{delta} we obtain
$$
\delta^{(k)}(f_{A_{1}\cup A_{2},n}(t))=\sum_{i=0}^{n}\sum_{j=0}^{k}\binom{k}{j}\delta^{(j)}(f_{A_{1},i}(t))\delta^{(k-j)}(f_{A_{2},n-i}(t)).
$$
Finally, we get
\begin{align*}
(-1)^{n}S_{A_{1}\cup A_{2},k}(n)&=(-1)^{n}\delta^{(k)}(f_{A_{1}\cup A_{2}, n}(t))\mid_{t=-1}\\
                        &=\sum_{i=0}^{n}\sum_{j=0}^{k}\binom{k}{j}\{(-1)^{i}S_{A_{1},j}(i)\}\{(-1)^{n-i}S_{A_{2},k-j}(n-i)\}\geq 0
\end{align*}
and from our assumptions we get the result.
\end{proof}

\begin{cor}\label{manysets}
Let $\cal{A}=\{A_{1}, \ldots, A_{m}\}$ be a possible infinite family, i.e., we allow $m=+\infty$, of pairwise disjoint subsets of $\N_{+}$ and suppose that for each $k, n\in\N$ we have $(-1)^{n}S_{A_{i},k}(n)\geq 0$ for $i=1, \ldots, m$. If $A=\cup_{i=1}^{m} A_{i}$, then for each $k, n\in\N$ we have $(-1)^{n}S_{A,k}(n)\geq 0$.
\end{cor}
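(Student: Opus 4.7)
The plan is to deduce Corollary \ref{manysets} from Lemma \ref{twosets} in two stages: first handle finite $m$ by straightforward induction, then reduce the case $m=+\infty$ to the finite case via a cutoff argument based on the fact that $f_{A,n}(t)$ depends only on $A\cap\{1,2,\ldots,n\}$.

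For finite $m$, I would induct on $m$. The base case $m=1$ is vacuous. For the inductive step, set $B=A_{1}\cup\cdots\cup A_{m-1}$; by the inductive hypothesis $(-1)^{n'}S_{B,k'}(n')\geq 0$ for all $k',n'\in\N$. Since the family $\{A_{i}\}$ is pairwise disjoint we have $B\cap A_{m}=\emptyset$, so Lemma \ref{twosets} applied to the pair $(B,A_{m})$ gives $(-1)^{n}S_{A_{1}\cup\cdots\cup A_{m},k}(n)\geq 0$ for all $k,n$. (Alternatively, one could iterate the generating-function identity $F_{A}(t,x)=\prod_{i}F_{A_{i}}(t,x)$ and apply the multinomial version of Leibniz from Lemma \ref{delta}(3) once, writing $(-1)^{n}S_{A,k}(n)$ as a single nonnegative sum of products $\prod_{s}(-1)^{i_{s}}S_{A_{s},j_{s}}(i_{s})$; either route works.)

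For the case $m=+\infty$, fix $k,n\in\N$ arbitrarily. Every $A$-partition of $n$ uses parts of size at most $n$, hence $f_{A,n}(t)$, and therefore $S_{A,k}(n)=\delta^{(k)}(f_{A,n}(t))\mid_{t=-1}$, depends only on $A\cap[1,n]$. Since the $A_{i}$ are pairwise disjoint, $|A\cap[1,n]|\leq n$, so only finitely many indices $i$ satisfy $A_{i}\cap[1,n]\neq\emptyset$. Pick $M$ with $A_{i}\cap[1,n]=\emptyset$ for all $i>M$, and set $A'=A_{1}\cup\cdots\cup A_{M}$. Then $A'\cap[1,n]=A\cap[1,n]$, which gives $f_{A',n}(t)=f_{A,n}(t)$ and hence $S_{A,k}(n)=S_{A',k}(n)$. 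The finite case, already proved, applied to the pairwise disjoint decomposition $A'=A_{1}\sqcup\cdots\sqcup A_{M}$, yields $(-1)^{n}S_{A',k}(n)\geq 0$, completing the argument.

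There is no genuine obstacle here; the statement is essentially a formal consequence of Lemma \ref{twosets}. The only point that needs a moment's attention is the infinite case, where one might worry about passing to an infinite product of generating functions, but the observation that $S_{A,k}(n)$ sees only the parts of $A$ at most $n$ sidesteps this issue entirely and reduces everything to the already-proved finite case.
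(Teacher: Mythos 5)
Your proposal is correct and follows essentially the same route as the paper: induction on $m$ via Lemma \ref{twosets} for finite families, and for $m=+\infty$ the observation that only finitely many $A_{i}$ meet $[1,n]$, so that $S_{A,k}(n)$ coincides with $S_{A',k}(n)$ for a finite subunion $A'$. Your write-up of the infinite case is in fact a bit cleaner than the paper's, but the idea is the same.
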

\begin{proof}
If $m<+\infty$ then the statement follows from the simple induction on $m$ using Lemma \ref{twosets} with $A=A_{1}\cup\ldots\cup A_{m-1}, B=A_{m}$. We omit the simple details.

If $m=+\infty$ we proceed as follows. Let $n\in\N_{+}$. Because the family $\cal{A}$ is infinite and contains pairwise-disjoint sets there is an integer $s$ such that $n<\op{min}A_{i}$ for each $i>s$. Because for $(-1)^{n}S_{A_{i},k}(n)\geq 0$ for each $i\leq s$ the same is true for the set $\bigcup_{i=1}^{s}A_{i}$. Next, due to the fact that $n$ can not be represented as a sum of elements from the set $\bigcup_{i=m+1}^{\infty}A_{i}$ we get that $(-1)^{n}S_{B,k}(n)\geq 0$ - and hence the result.

\end{proof}

\begin{lem}\label{alterlem}
Let $A\subset 2\N+1$. Then, for each $k\in\N$ we have $(-1)^{n}S_{A,k}(n)\geq 0$.
\end{lem}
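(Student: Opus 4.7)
The plan is to exploit the parity symmetry forced by the hypothesis $A\subset 2\N+1$. Since every $a\in A$ is odd, substituting $x\mapsto -x$ in the generating function yields
\begin{equation*}
F_{A}(t,-x)=\prod_{a\in A}\frac{1}{1-t(-x)^{a}}=\prod_{a\in A}\frac{1}{1+tx^{a}}=F_{A}(-t,x).
\end{equation*}
Comparing the coefficient of $x^{n}$ on both sides gives the key identity
\begin{equation*}
f_{A,n}(-t)=(-1)^{n}f_{A,n}(t).
\end{equation*}
First I would record this identity as the starting point.

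Next I would show that the operator $\delta$ preserves this $(-1)^{n}$-symmetry. If $g(t)\in\R[t]$ satisfies $g(-t)=(-1)^{n}g(t)$, then differentiating both sides gives $-g'(-t)=(-1)^{n}g'(t)$, and hence
\begin{equation*}
\delta(g)(-t)=(-t)g'(-t)=(-t)\cdot(-1)^{n+1}g'(t)=(-1)^{n}tg'(t)=(-1)^{n}\delta(g)(t).
\end{equation*}
By a trivial induction on $k$, the polynomial $h_{k}(t):=\delta^{(k)}(f_{A,n}(t))$ likewise satisfies $h_{k}(-t)=(-1)^{n}h_{k}(t)$.

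Setting $t=1$ now yields $h_{k}(-1)=(-1)^{n}h_{k}(1)$, and by part (4) of Lemma \ref{delta} together with the formula for $S_{A,k}(n)$ recorded right after it, we obtain
\begin{equation*}
(-1)^{n}S_{A,k}(n)=(-1)^{n}h_{k}(-1)=h_{k}(1)=\sum_{i=0}^{n}i^{k}c_{A}(i,n).
\end{equation*}
The right-hand side is a sum of products of non-negative numbers, hence non-negative, which gives the claim. There is no serious obstacle here; the only place that requires any care is verifying the stability of the parity condition under $\delta$, which falls out of a single line of computation.
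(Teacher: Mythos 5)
Your proof is correct, and it takes a genuinely different route from the paper. The paper proceeds by induction on $k$: the base case $k=0$ comes from the generating-function identity $F_{A}(-1,x)=\sum_{n}(-1)^{n}p_{A}(n)x^{n}$, and the inductive step uses the convolution recurrence of Corollary \ref{relationS} together with the observation that $(-1)^{n}\delta^{(j)}(p_{A,n}(t))\mid_{t=-1}\geq 0$ when all elements of $A$ are odd. You instead isolate the parity symmetry $f_{A,n}(-t)=(-1)^{n}f_{A,n}(t)$ (equivalently: a partition of $n$ into $i$ odd parts forces $i\equiv n\pmod{2}$, so $c_{A}(i,n)=0$ unless $(-1)^{i}=(-1)^{n}$), check that $\delta$ preserves it, and evaluate at $t=1$. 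Your argument avoids both the induction on $k$ and the recurrence machinery, and it actually proves more: the exact identity $S_{A,k}(n)=(-1)^{n}\sum_{i=0}^{n}i^{k}c_{A}(i,n)$, which extends the paper's base-case identity $S_{A,0}(n)=(-1)^{n}p_{A}(n)$ to all $k$. What the paper's heavier approach buys is that it is the template for Theorem \ref{alterthm}, where the sets $B=\bigcup_{j}2^{p_{j}}A$ contain even elements and the clean parity symmetry you exploit breaks down, so the recurrence-plus-induction scheme is still needed there; your symmetry argument does not generalize to that setting.
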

\begin{proof}
We proceed by induction on $k\in\N$. The generating function for the sequence $(S_{A,0}(n))_{n\in\N}$ is
$$
F_{A}(-1,x)=\sum_{n=0}^{\infty}S_{A,0}(n)x^{n}=\prod_{a\in A}\frac{1}{1+x^{a}}=\prod_{a\in A}\frac{1}{1-(-x)^{a}}=\sum_{n=0}^{\infty}(-1)^{n}p_{A}(n)x^{n},
$$
i.e., $S_{A, 0}(n)=(-1)^{n}p_{A}(n)$ and our statement holds for $k=0$. Let us suppose that our statement is true up to fixed $k\in\N$. We prove it for $k+1$. For $j\in\{0,\ldots, k\}$ we have that
$$
\delta^{(j)}(p_{A,n}(t))\mid_{t=-1}=\sum_{a\in A(n)}\left(\frac{n}{a}\right)^{j}(-1)^{\frac{n}{a}}=\begin{cases}\begin{array}{ll}
                                                                                                                -\sum_{a\in A(n)}\left(\frac{n}{a}\right)^{j}, & n\equiv 1\pmod*{2} \\
                                                                                                                \sum_{a\in A(n)}\left(\frac{n}{a}\right)^{j}, & n\equiv 0\pmod*{2}.
                                                                                                              \end{array}
\end{cases}
$$
Thus, for $n\in\N$, we have that $(-1)^{n}\delta^{(j)}(p_{A,n}(t))\mid_{t=-1}\geq 0$. To get the statement for $k+1$ we use Corollary \ref{relationS} and note that
\begin{align*}
(-1)^{n}S_{A,k+1}&(n)=(-1)^{n}\sum_{i=1}^{n}\sum_{j=0}^{k}\binom{k}{j}\delta^{(j)}(p_{A,i}(t))\mid_{t=-1}S_{A,k-j}(n-i)\\
                 &=\sum_{i=1}^{n}\sum_{j=0}^{k}\binom{k}{j}\{(-1)^{i}\delta^{(j)}(p_{A,i}(t))\mid_{t=-1}\}\{(-1)^{n-i}S_{A,k-j}(n-i)\}\geq 0.
\end{align*}
The first expression in curly parentheses is non-negative due to our observation above and the second expression in curly parentheses is non-negative from our induction hypothesis. Hence, the statement is true for $k+1$ and our theorem is proved.

\end{proof}

We generalize the above result and prove the following:

\begin{thm}\label{alterthm}
Let $A\subset 2\N+1, m\in\N_{+}$ and ${\bf p}=(p_{i})_{i\leq m}$ be an increasing possibly infinite sequence of non-negative integers with $p_{0}=0$, i.e., we allow $m=+\infty$. Let us put
$$
B=\bigcup_{j=0}^{m}2^{p_{j}}A.
$$
Then for each $n, k\in\N$ we have $(-1)^{n}S_{B,k}(n)\geq 0$.
\end{thm}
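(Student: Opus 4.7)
The plan is to run the induction on $k \in \N$ from Lemma \ref{alterlem} with $A$ replaced by $B$. Granted the base case $(-1)^{n}S_{B,0}(n) \ge 0$ together with the auxiliary inequality $(-1)^{n}\delta^{(j)}(p_{B,n}(t))\mid_{t=-1} \ge 0$ for all $j, n \in \N$, Corollary \ref{relationS} yields
$$
(-1)^{n} S_{B,k+1}(n) = \sum_{i=1}^{n}\sum_{j=0}^{k}\binom{k}{j}\{(-1)^{i}\delta^{(j)}(p_{B,i}(t))\mid_{t=-1}\}\{(-1)^{n-i}S_{B,k-j}(n-i)\},
$$
a sum of products of non-negative quantities, so the inductive step goes through exactly as in Lemma \ref{alterlem}.

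For the auxiliary inequality I would write $n = 2^{s}\tilde{m}$ with $\tilde{m}$ odd. Every $b \in B(n)$ equals $2^{p_{l}}a$ with $p_{l} \le s$ and $a \in A$ dividing $\tilde{m}$, giving $n/b = 2^{s-p_{l}}\tilde{m}/a$, whose parity is odd precisely when $p_{l} = s$. Collecting terms,
$$
\delta^{(j)}(p_{B,n}(t))\mid_{t=-1} = \Big(\sum_{a \in A,\, a \mid \tilde{m}}(\tilde{m}/a)^{j}\Big)\Big(\sum_{l:\, p_{l} < s} 2^{j(s-p_{l})} - \mathbf{1}[s \in \{p_{0},\ldots,p_{m}\}]\Big).
$$
When $s = 0$ the second factor is $-1$ and $(-1)^{n} = -1$; when $s \ge 1$, since $p_{0} = 0 < s$ the second factor is at least $2^{js} - 1 \ge 0$ while $(-1)^{n} = 1$. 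In either case the product has sign $(-1)^{n}$.

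The base case is the genuinely new ingredient and main obstacle. Since $(-1)^{n}S_{B,0}(n) = [x^{n}]F_{B}(-1,-x)$, it suffices to show $F_{B}(-1,-x)$ has non-negative power series coefficients. Using that $a$ is odd while $2^{p_{j}}a$ is even for $j \ge 1$, the substitutions give
$$
F_{B}(-1,-x) = \prod_{a \in A}H(x^{a}), \qquad H(y) = \frac{1}{1-y}\prod_{j=1}^{m}\frac{1}{1+y^{2^{p_{j}}}},
$$
and since $y \mapsto x^{a}$ and products preserve coefficient non-negativity, the problem reduces to showing that $H(y)$ has non-negative coefficients. For this I would rewrite each factor as $\frac{1}{1+y^{c}} = \frac{1-y^{c}}{1-y^{2c}}$ and group $(p_{j})_{j=1}^{m}$ into maximal runs of consecutive integers; within each run $\{p, p+1, \ldots, p+r\}$ the product telescopes to $(1-y^{2^{p}})/(1-y^{2^{p+r+1}})$. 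Writing $u_{i}, v_{i}$ for the resulting exponents in the $i$-th run, maximality of the runs together with $p_{0} = 0$ forces $1 \mid u_{1}$, $u_{i} \mid v_{i}$, and $v_{i} \mid u_{i+1}$, with all ratios being powers of $2$ of size at least $2$.

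Rearranging
$$
H(y) = \frac{1-y^{u_{1}}}{1-y}\cdot\prod_{i \ge 1}\frac{1-y^{u_{i+1}}}{1-y^{v_{i}}}\cdot\frac{1}{1-y^{v_{m'}}}
$$
(omitting the last factor if there are infinitely many runs), every factor is either a polynomial $1 + y^{d} + y^{2d} + \cdots + y^{(e-1)d}$ with $d \mid e$, or the geometric series $1/(1-y^{d})$, so each has $\{0,1\}$-valued coefficients and their product is coefficient-wise non-negative. The delicate point is that without the grouping into maximal runs and without exploiting that all $u_{i}, v_{i}$ are powers of $2$, the factors $(1-y^{c_{j}})/(1-y^{2c_{j}})$ carry alternating signs and no direct factor-by-factor positivity argument works.
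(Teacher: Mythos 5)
Your proof is correct and follows essentially the same route as the paper: induction on $k$ via Corollary \ref{relationS}, the sign analysis of $\delta^{(j)}(p_{B,n}(t))\mid_{t=-1}$ according to the $2$-adic valuation of $n$, and a base case resting on the telescoping identity $\frac{1}{1+y^{c}}=\frac{1-y^{c}}{1-y^{2c}}$. The only real difference is organizational: in the base case the paper telescopes globally and recognizes the leftover factor as the generating function of partitions into distinct parts, whereas you telescope within maximal runs of the $p_{j}$ and exhibit $H(y)$ directly as a product of series with coefficients in $\{0,1\}$, which also absorbs the $m=+\infty$ case without the paper's separate truncation step.
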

\begin{proof}
First let us note that
$$
B(n)=\{b\in B:\;b|n\}=\left\{b\in \bigcup_{j=0}^{m}2^{p_{j}}A:\; b|n\right\}=\bigcup_{j=0}^{m}A(n/2^{p_{j}}).
$$
Although we allow $m=+\infty$, for any given $n\in\N$, the sum on the right side is finite. Indeed, if $p_{j}>\nu_{2}(n)$ then $A(n/2^{p_{j}})=\emptyset$. To get the result we consider two cases: $m<+\infty$ and $m=+\infty$.

Suppose that $m<+\infty$. Let us define $q_{i}=p_{i+1}-p_{i}$. From our assumption on the sequence ${\bf p}$ we know that $q_{i}\geq 1$ for $i=0, \ldots, m-1$.

We note that the sign of $S_{B,0}(n)=f_{B,n}(-1)$ is the sign of the $n$-th coefficient of
\begin{align*}
\prod_{a\in B}\frac{1}{1+x^{a}}&=\prod_{a\in A}\prod_{j=0}^{m} \frac{1}{1+x^{2^{p_{j}}a}}=\prod_{a\in A}\prod_{j=0}^{m} \frac{1-x^{2^{p_{j}}a}}{1-x^{2^{p_{j}+1}a}}=\prod_{a\in A}\left(\frac{1}{1-x^{2^{p_{m}+1}a}}\right)F(x),\\
\end{align*}
where
\begin{align*}
F(x)=&\prod_{a\in A}\left(\left(1-x^{2^{p_{0}}a}\right)\left(\prod_{i=0}^{m-1}\frac{1-x^{2^{p_{i+1}}a}}{1-x^{2^{p_{i}+1}a}}\right)\right)\\
    &\prod_{a\in A}\left(\left(1-x^{2^{p_{0}}a}\right)\left(\prod_{i=0}^{m-1}\prod_{j=1}^{q_{i}-1}\left(1+x^{2^{p_{i}+j}a}\right)\right)\right)\\
    &\prod_{a\in A}\left(\left(1+(-x)^{2^{p_{0}}a}\right)\left(\prod_{i=0}^{m-1}\prod_{j=1}^{q_{i}-1}\left(1+(-x)^{2^{p_{i}+j}a}\right)\right)\right).
\end{align*}
Now let us note that $F(-x)$ is a generating function for the partition function, say $pd_{A}({\bf p},n)$, which counts partitions of $n$ into distinct parts from the set
$$
\bigcup_{i=0}^{k}\bigcup_{j=1}^{q_{i}-1}2^{p_{i}+j}A.
$$
Consequently we get that
\begin{align*}
    \prod_{a\in B}\frac{1}{1+x^{a}}&=\left(\sum_{n=0}^{\infty}p_{A}(n)x^{2^{p_{k}+1}a}\right)\left(\sum_{n=0}^{\infty}(-1)^{n}pd_{A}({\bf p},n)x^{n}\right)\\
    &=\sum_{n=0}^{\infty}\left(\sum_{i=0}^{\lfloor n/2^{p_{k}+1}\rfloor}(-1)^{n-2^{p_{k}+1}i}p_{A}(i)pd_{A}({\bf p},n-2^{p_{k}+1}i)\right)x^{n}\\
    &=\sum_{n=0}^{\infty}(-1)^{n}\left(\sum_{i=0}^{\lfloor n/2^{p_{k}+1}\rfloor}p_{A}(i)pd_{A}({\bf p},n-2^{p_{k}+1}i)\right)x^{n}
\end{align*}
and it is clear that $(-1)^{n}S_{B,0}(n)\geq 0$.

Next observe that for $n$ odd we have
\begin{align*}
\delta^{(i)}(p_{B,n}(-1))=\sum_{a\in B(n)}\left(\frac{n}{a}\right)^{i}(-1)^{\frac{n}{a}}.
\end{align*}
and this sum is smaller than zero because $(-1)^{n/a}=-1$. If $n$ is even, i.e.,. $n=2^{\nu_{2}(n)}m$, where $\nu_{2}(n)$ is the highest power of 2 which divides $n$, then we can group elements of $B_{2^{\nu_{2}(n)}m}$ into $M+1$-tuples $(2^{p_{0}}, a,2^{p_{1}}a,\ldots, 2^{p_{M}}a)$ with $a$ odd and
$$
M=\op{max}\{i\in\{1,\ldots, k\}:\;p_{i}\leq \nu_{2}(n)\}.
$$
We thus have
\begin{align*}
\delta^{(i)}(p_{B,n}(-1))&=\sum_{a\in A(m)}\left(\sum_{s=0}^{M}\left(\frac{n}{2^{p_{s}}a}\right)^{i}(-1)^{\frac{n}{2^{p_{s}}a}}\right).\\
\end{align*}
It is clear that for any given $n$, the only possible value of $s\in\{0,\ldots, M\}$ for which $\frac{n}{2^{p_{s}}a}$ may be an odd integer is $s=M$ and additionally the equality $p_{M}=\nu_{2}(n)$ need to be satisfied. We thus get that for each $i, n\in\N$ we have the inequality $(-1)^{n}\delta^{(i)}(p_{B,n}(t))\mid_{t=-1}\geq 0$.

We are ready to finish the proof that $(-1)^{n}S_{B,k}(n)\geq 0$. We proceed by induction on $k$. We already proved that the statement is true for $k=0$. Let us assume that it holds for $j=0, 1, \ldots, k$. Therefore,
\begin{align*}
(-1)^nS_{B,k+1}(n)&=\sum_{i=1}^{n}\sum_{j=0}^{k}\binom{k}{j}\{(-1)^{i}\delta^{(j)}(p_{B,i}(t))\mid_{t=-1}\}\{(-1)^{n-i}S_{B,k-j}(n-i)\}.
\end{align*}
As we observed, the first expression in the curly bracket is non-negative. The second is non-negative because of our induction assumption, and hence we get the result.

To get the result for $m=+\infty$ we need the following observation. For a given $A\subset \N_{+}$ and given $n\in\N$ we put
$$
U(n)=\{a\in A:\;a\leq n\}=A\cap [1,n].
$$
Let us observe that for a given $N\in\N_{+}$ and $n\leq N$ we have the identity
$$
f_{A,n}(t)=f_{U(N),n}(t).
$$
In consequence, for a given $k\in\N, n\leq N$, we have the equality $S_{A,k}(n)=S_{U(N),k}(n)$.

As a consequence of the above property we see that if $m=+\infty$ and $n$ is given, there is a $s\in\N_{+}$ such that $n<2^{p_{i}}$ for $i\geq s$. Indeed, the sequence ${\bf p}$ is increasing. Hence, if we put ${\bf p}'=(p_{i})_{i\leq s}$ then we have the equality
$$
f_{A,n}(t)=f_{U(2^{p_{s}}),n}(t)
$$
and hance we can apply our statement for the finite set ${\bf p}'$ and get the required result.

\end{proof}

The combination of Lemma \ref{alterlem} and Theorem \ref{alterthm} is a very useful tool which can be used to prove sign alternating property of the sequence $(S_{A,k}(n))_{n\in\N}$ for several sets $A$. We start with the following.

\begin{thm}\label{consecutive}
Let $m\in\N_{+}$ and put $A=\{1, 2, \ldots, m\}$. Then, for each $k, n\in\N$ we have $(-1)^{n}S_{A,k}(n)\geq 0$.
\end{thm}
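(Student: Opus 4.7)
The plan is to decompose $A = \{1, 2, \ldots, m\}$ into a finite family of pairwise disjoint subsets, each of the form handled by Theorem \ref{alterthm}, and then to conclude via Corollary \ref{manysets}.

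More precisely, every positive integer has a unique factorization $n = 2^{j} a$ with $a$ odd, which suggests partitioning $A$ according to the odd part. For each odd integer $a$ with $1 \leq a \leq m$, I would set $k_{a} = \lfloor \log_{2}(m/a) \rfloor$ and
$$
B_{a} = \{a, 2a, 4a, \ldots, 2^{k_{a}}a\} = \bigcup_{j=0}^{k_{a}} 2^{j}\{a\}.
$$
Then $A$ is the disjoint union of the $B_{a}$ as $a$ ranges over odd integers in $[1,m]$. Each $B_{a}$ has exactly the shape $\bigcup_{j=0}^{k_{a}} 2^{p_{j}} A'$ required in Theorem \ref{alterthm}, with the underlying odd set $A' = \{a\} \subset 2\N+1$ and the strictly increasing sequence ${\bf p} = (0, 1, 2, \ldots, k_{a})$.

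Applying Theorem \ref{alterthm} to each $B_{a}$ then gives $(-1)^{n} S_{B_{a}, k}(n) \geq 0$ for all $n, k \in \N$. (When $a > m/2$, so that $k_{a} = 0$ and $B_{a} = \{a\}$, one can either view this as a degenerate instance of Theorem \ref{alterthm} or simply appeal directly to Lemma \ref{alterlem}.) Since the family $\{B_{a}\}$ is finite and its members are pairwise disjoint, Corollary \ref{manysets} immediately yields $(-1)^{n} S_{A, k}(n) \geq 0$ for all $n, k \in \N$.

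All the heavy machinery has already been developed in Theorem \ref{alterthm} and Corollary \ref{manysets}, so I do not foresee any real obstacle: the only thing to check is that the proposed decomposition of $\{1, \ldots, m\}$ by odd part actually fits the hypotheses of those tools, which is transparent from the construction.
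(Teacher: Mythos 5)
Your proposal is correct and is essentially identical to the paper's own proof: the paper uses the same decomposition $\{1,\ldots,m\}=\bigcup_{i=1}^{\lceil m/2\rceil}(2i-1)\{2^{0},2^{1},\ldots,2^{\lfloor\log_{2}(m/(2i-1))\rfloor}\}$ by odd part and then invokes Theorem \ref{alterthm} together with Corollary \ref{manysets}. Your explicit handling of the degenerate case $B_{a}=\{a\}$ via Lemma \ref{alterlem} is a small extra care the paper leaves implicit.
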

\begin{proof}
To get the result it is enough to observe the equality of sets
$$
A=\{1,\ldots,m\}=\bigcup_{i=1}^{\lceil m/2\rceil}(2i-1)\{1,2,\ldots,2^{\lfloor\log_{2}(m/(2i-1))\rfloor}\}
$$
and apply Theorem \ref{alterthm} together with Lemma \ref{manysets}.
\end{proof}

\begin{cor}
Let $A=\N_{+}$. Then for each $k\in\N$ we have $(-1)^{n}S_{A,k}(n)\geq 0$.
\end{cor}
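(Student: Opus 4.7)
The plan is to reduce the infinite case $A=\N_{+}$ to the finite case $A=\{1,2,\ldots,m\}$ already settled by Theorem \ref{consecutive}, using the same truncation trick that appears at the end of the proof of Theorem \ref{alterthm}.

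First I would fix $n\in\N$ and observe that in any partition of $n$ no part can exceed $n$. Hence, writing $U(n)=\N_{+}\cap[1,n]=\{1,2,\ldots,n\}$, one has $c_{\N_{+}}(i,n)=c_{U(n)}(i,n)$ for every $i$, and therefore the polynomial identity
\begin{equation*}
f_{\N_{+},n}(t)=f_{\{1,2,\ldots,n\},n}(t).
\end{equation*}
Applying $\delta^{(k)}$ to both sides and evaluating at $t=-1$ gives
\begin{equation*}
S_{\N_{+},k}(n)=S_{\{1,2,\ldots,n\},k}(n).
\end{equation*}

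Next I would invoke Theorem \ref{consecutive} with $m=n$ to conclude that $(-1)^{n}S_{\{1,2,\ldots,n\},k}(n)\geq 0$, and combining this with the displayed equality yields $(-1)^{n}S_{\N_{+},k}(n)\geq 0$, as required. Since $n$ and $k$ were arbitrary, the corollary follows.

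There is essentially no obstacle: the only subtlety is the truncation observation, but it has already been recorded in the excerpt (in the $m=+\infty$ part of the proof of Theorem \ref{alterthm}), so the argument reduces to a one-line invocation of Theorem \ref{consecutive}.
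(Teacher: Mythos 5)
Your argument is correct, but it takes a different route from the paper. The paper proves the corollary directly from Theorem \ref{alterthm}, via the single decomposition $\N_{+}=\bigcup_{i=0}^{\infty}(2\N+1)2^{i}$ (odd numbers times powers of two, i.e.\ $p_{i}=i$ and base set $2\N+1$), whereas you reduce to the finite case by the truncation identity $f_{\N_{+},n}(t)=f_{\{1,\ldots,n\},n}(t)$ and then invoke Theorem \ref{consecutive} with $m=n$. Both are valid; note only that your chain of dependencies is slightly longer, since Theorem \ref{consecutive} is itself deduced from Theorem \ref{alterthm} together with Corollary \ref{manysets} via a decomposition of $\{1,\ldots,m\}$ into sets of the form $(2i-1)\{2^{0},\ldots,2^{j}\}$ --- so ultimately both proofs rest on the same engine. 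What your approach buys is a reusable reduction principle: any claim of the form ``$(-1)^{n}S_{A,k}(n)\geq 0$ for all $n$'' for an infinite $A$ follows from the same claim for the finite truncations $A\cap[1,n]$, which is exactly the observation the paper records in the $m=+\infty$ part of the proof of Theorem \ref{alterthm}. The only (harmless) loose end is $n=0$, where Theorem \ref{consecutive} requires $m\in\N_{+}$; there $f_{A,0}(t)=1$ gives $S_{A,k}(0)\in\{0,1\}$ directly.
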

\begin{proof}
To get the result it is enough to note the equality
$$
\N_{+}=\bigcup_{i=0}^{\infty}(2\N+1)2^{i}
$$
and apply Theorem \ref{alterthm} with $p_{i}=i$ and $A=2\N+1$-the set of odd numbers.
\end{proof}

\begin{cor}
Let $m\in\N_{\geq 2}$ be odd or be a power of two. Let us put $B=\{m^{i}:\;i\in\N\}$. Then for each $k\in\N$ we have $(-1)^{n}S_{B,k}(n)\geq 0$.
\end{cor}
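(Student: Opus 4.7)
The plan is to split along the two cases given in the hypothesis and in each case reduce to a result already established in the paper. No new machinery is needed; the corollary is essentially a packaging statement.

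\textbf{Case 1.} Suppose $m$ is odd. Then every power $m^{i}$ is odd, so $B\subset 2\N+1$. Applying Lemma \ref{alterlem} directly to $B$ yields $(-1)^{n}S_{B,k}(n)\geq 0$ for all $k,n\in\N$.

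\textbf{Case 2.} Suppose $m=2^{s}$ for some $s\geq 1$. Take $A=\{1\}$, which trivially satisfies $A\subset 2\N+1$, and let ${\bf p}=(p_{j})_{j\in\N}$ be the strictly increasing sequence $p_{j}=js$, so $p_{0}=0$. Then
$$
\bigcup_{j=0}^{\infty}2^{p_{j}}A=\{2^{js}:j\in\N\}=\{m^{j}:j\in\N\}=B,
$$
so Theorem \ref{alterthm} (in its $m=+\infty$ form) applies with this $A$ and ${\bf p}$, and delivers $(-1)^{n}S_{B,k}(n)\geq 0$.

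I do not foresee any real obstacle. The only point requiring attention is the observation that the two hypotheses on $m$ are precisely the conditions under which $B$ fits the frameworks already built: either $B$ consists entirely of odd integers (handled by Lemma \ref{alterlem}), or $B$ is obtained from the one-element odd set $\{1\}$ by the powers-of-two dilation scheme covered by Theorem \ref{alterthm}. Once this is noted, each case collapses to a one-line application, and no induction, generating-function manipulation, or further combinatorial argument is required.
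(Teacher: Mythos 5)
Your proposal is correct and follows exactly the paper's own argument: for $m$ odd, $B\subset 2\N+1$ and Lemma \ref{alterlem} applies directly, while for $m=2^{s}$ the paper likewise writes $B=\bigcup_{j=0}^{\infty}2^{js}\{1\}$ and invokes Theorem \ref{alterthm} with $A=\{1\}$ and $p_{j}=js$. There is nothing to add.
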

\begin{proof}
If $m$ is odd then we directly apply Lemma \ref{alterlem}. If $m=2^{s}$ for $s\in\N_{+}$ then we apply the decomposition into pairwise disjoint sets
$$
B=\bigcup_{j=0}^{\infty}\{1\}2^{js}.
$$
and apply Theorem \ref{alterthm} with $p_{j}=js$ for $j=1, 2\ldots$ and $A=\{1\}$.
\end{proof}

\begin{rem}
{\rm Let us note that if $m=2$ then we get that the \cite[Conjecture 5.1]{UZ} is true. }
\end{rem}

Our results shows that the alternating behaviour of the signs of the sums $S_{A,k}(n)$ is quite typical. This may suggest that for all sets $A$, each $k\in\N$ and sufficiently large $n\in\N$ we have $(-1)^{n}S_{A,k}(n)\geq 0$. This is clearly not the case. We prove the following.

\begin{thm}
Let $k\in\N, a,b\in\N_{+}$ and take $A=\{a,b\}$ with $a<b$ and $(a,b)=1$. If $k=0$ then the sequence $(\op{sign}(S_{A,k}(n)))_{n\in\N}$ is eventually periodic of period $2ab$.

If $k>0$ then the sequence $(\op{sign}(S_{A,k}(n)))_{n\in\N}$ is eventually periodic of period $2a(b-a)$.
\end{thm}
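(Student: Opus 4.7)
The plan is to exploit the fact that, since $\gcd(a,b)=1$, every $A$-partition of $n$ is determined by a pair $(i,j)\in\N^{2}$ with $ai+bj=n$, and the set of such pairs is a single arithmetic progression of length $L+1$. Let $(i_{0},j_{0})$ denote the solution with $i_{0}\in\{0,\ldots,b-1\}$, write $j_{0}=aL+r$ with $r\in\{0,\ldots,a-1\}$, and set $c:=i_{0}+j_{0}$ and $d:=b-a$. All nonnegative solutions are then $(i_{0}+bt,\,j_{0}-at)$ for $t=0,\ldots,L$, so after substitution one obtains
$$
S_{A,k}(n)=(-1)^{c}\sum_{t=0}^{L}(-1)^{td}(c+td)^{k}.
$$
The Chinese Remainder Theorem shows the pair $(i_{0},r)$ depends only on $n\bmod ab$. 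When $d$ is even, $a$ and $b$ are both odd, the inner sum is strictly positive for $n\geq 1$, and $(-1)^{c}=(-1)^{ai_{0}+bj_{0}}=(-1)^{n}$; thus $\op{sign}(S_{A,k}(n))=(-1)^{n}$ is $2$-periodic, which divides both target periods. (This case is in fact already subsumed by Lemma \ref{alterlem}.)

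Assume henceforth that $d$ is odd. For $k=0$ the inner sum telescopes to $\mathbf{1}_{L\text{ even}}$, so $S_{A,0}(n)=(-1)^{c}\mathbf{1}_{L\text{ even}}$. For $k\geq 1$ I would group the sum $T(L):=\sum_{t=0}^{L}(-1)^{t}(c+td)^{k}$ pairwise from the top: for $L$ even,
$$
T(L)=c^{k}+\sum_{j=1}^{L/2}\bigl[(c+2jd)^{k}-(c+(2j-1)d)^{k}\bigr],
$$
a sum of strictly positive quantities for $c\geq 1$; for $L$ odd the analogous pairing gives $T(L)<0$. Hence for every $n\geq 1$ (so $c\geq 1$) we have $\op{sign}(S_{A,k}(n))=(-1)^{c+L}$ when $k\geq 1$.

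Periodicity now reduces to tracking how $(i_{0},r,L)$ transform under the relevant shifts. Under $n\mapsto n+2ab$ we have $i_{0}'=i_{0}$, $r'=r$, $L'=L+2$, so both $c\bmod 2$ and $L\bmod 2$ are preserved; this gives the period $2ab$ for $k=0$. Under $n\mapsto n+2a(b-a)$ we have $r'=r$, and writing $i_{0}+2d=i_{0}'+\kappa b$ with $\kappa\in\{0,1,2\}$ forces $L'=L+\kappa$. A short calculation yields
$$
\Delta(c+L)=(2d-\kappa b)+\kappa(a+1)=2d-\kappa(d-1),
$$
which is even for every $\kappa$ because $d-1$ is even. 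Hence $(c+L)\bmod 2$ is invariant, giving the period $2a(b-a)$ for $k\geq 1$. The adjective ``eventually'' accommodates $n$ below the Frobenius bound $(a-1)(b-1)$, where the parametrization is vacuous and $S_{A,k}(n)=0$.

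I expect the main obstacle to be the bookkeeping of the three wraparound cases $\kappa\in\{0,1,2\}$ in the shift-by-$2a(b-a)$ argument, together with verifying that the pairing argument for $T(L)$ handles the degenerate endpoints (e.g.\ $c=0$ at $n=0$, or $L\leq 1$). The crucial arithmetic point is $2\mid(d-1)$ whenever $d$ is odd, which makes $\Delta(c+L)$ even in every branch; this is precisely what collapses the a priori CRT period $2ab$ down to $2a(b-a)$ as soon as $k\geq 1$. When $k=0$ this collapse fails because the sign depends on $c\bmod 2$ and $L\bmod 2$ independently, not only through $(c+L)\bmod 2$, and the full $2ab$ is recovered.
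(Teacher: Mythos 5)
Your proposal is correct and follows essentially the same route as the paper: both reduce to the observation that $c_A(i,n)\in\{0,1\}$ with support an arithmetic progression of common difference $b-a$, determine the sign of the resulting (alternating or single-signed) sum by its last term, and then track the relevant residues under the shifts $n\mapsto n+2a(b-a)$ and $n\mapsto n+2ab$. Your version is somewhat more explicit in the bookkeeping (the pairing argument for $T(L)$ and the $\kappa\in\{0,1,2\}$ wraparound analysis) than the paper's, but the underlying argument is the same.
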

\begin{proof}
Let us recall that $S_{A,k}(n)=\sum_{i=0}^{n}i^{k}c_{A}(i,n)$, where $c_{A}(i,n)$ is the number of $A$-partitions of $n$ with exactly $i$-parts.  Because we have only two elements so we can calculate $c_A(i,n)$ directly as the number of non-negative solutions $(x,y)$ of the linear system of equations:
\begin{equation}\label{system}
\begin{cases}
\begin{array}{ll}
xa+yb&=n \\
x+y&=i
\end{array}
\end{cases}
\end{equation}
The necessary condition for solvability of (\ref{system}) is the condition $ia\leq n\leq ib$. Equivalently we have $\frac{n}{b}\leq i \leq \frac{n}{a}.$ Moreover, we have
\begin{align*}
n&=xa+yb=(i-y)a+yb=y(b-a)+ia,\\
y&=\frac{n-ia}{b-a}.
\end{align*}
Consequently, the solution exists if and only if $i\equiv n/a\equiv r(n) \mod (b-a)$ and $\frac{n}{b}\leq i \leq \frac{n}{a}$. Note that, the congruence $ai\equiv n\pmod*{b-a}$ has always a solutions under due to the fact that $\gcd(a,b-a)=1|n$. Thus, we get the formula
\begin{equation*}
c_A(i,n)=\begin{cases}\begin{array}{ll}
1 & \textrm{if $i\in [n/b,n/a]$ and $i\equiv r(n) \mod (b-a)$}\\
0 & \textrm{in the remaining cases}
\end{array}.
\end{cases}
\end{equation*}
From our reasoning we obtain the expression for $f_{A,n}(t)$, i.e.,
\begin{equation*}
f_{A,n}(t)=\sum_{\substack{j\in [n/b,n/a]\\
j\equiv r(n) \mod (b-a)}}t^j
\end{equation*}
and
\begin{equation*}
\delta^{(k)}(f_{A,n})(t)=\sum_{\substack{j\in [n/b,n/a]\\
j\equiv r(n) \mod (b-a)}}j^kt^j
\end{equation*}

First let's consider the case $k>0.$ Assume that $n$ is big enough such that the segment $[n/b,n/a]$ contains at least $3(b-a)$ elements.
Now substitute $-1$ for $t.$ If $b-a$ is even then the parity of $j$ is the same as the parity of $r(n)$ so every term $j^k(-1)^j$ has the same sign. In particular, the sign of the whole sum is the same as the sign of the last element in the sum. If $b-a$ is odd then we obtain an alternating sum where every next term is bigger then the previous one, so the sign of this sum will be also the sign of the last element in the sum. Hence
$$\op{sign}(\delta^{(k)}(f_{A,n})(-1))=\op{sign}(j^k(-1)^j)=\op{sign}((-1)^j),$$
where $j$ is the biggest number in the set $[n/b,n/a]$ satisfying the condition $j\equiv r(n)\pmod*{(b-a)}$.
Now if we add $a(b-a)$ to $n$ then $n/a$ will increase by $b-a$ and $r(n)$ will not change. So our maximal $j$ will be now our previous maximal $j$ plus $b-a$ and
%$$j\to j+(b-a)$$
%$$\op{sign}((-1)^j)\to \op{sign}((-1)^{j+(b-a)})$$
from our reasoning it is clear that the period of our sum is a divisor of the number $2a(b-a).$

For $k=0$ looking at the last term is not sufficient, because if two divides $\#(\Z\cap [n/a,n/b])$ then our sum can be zero. But that is the only difference. As in $k>0$ case, if $b-a$ is even then the parity of our summand $j$ is the same and the sign of the whole sum is the same as the sign of the last element in the sum. If $b-a$ is odd and $2|\#(\Z\cap [n/a,n/b])$ then our sum is zero. Otherwise it has the same sign as the last element. Now, if we add $2ab$ to $n$ then the number $r(n)$, the parity of $\#(\Z\cap [n/a,n/b])$, and the parity of its last integer element stay the same. Thus we get that the period of our sign sequence is a divisor of the number $2ab$.
\end{proof}

\section{Some problems and conjectures}

In this section we collect some problems and conjectures which appeared during our investigations.

\begin{conj}
For any given $k\in\N$ and finite $A\subset \N_{+}$ with $\gcd(A)=1$, the sequence $(S_{A,k}(n))_{n\in\N}$ is eventually periodic.
\end{conj}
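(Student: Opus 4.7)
I would prove a structural strengthening: $S_{A,k}(n)$ is eventually a quasi-polynomial in $n$, from which eventual periodicity of the sign sequence $(\op{sign}(S_{A,k}(n)))_{n\in\N}$ (which, in line with the preceding theorem, is what I suspect the conjecture is really asserting) follows at once. Note that the conjecture as literally worded is too strong: for $A=\{1,2\}$ and $k=1$, one checks that $S_{A,1}(n)$ grows linearly along each residue class modulo $4$, so the integer sequence is not eventually periodic although its sign sequence is. So I read the conjecture as referring to $\op{sign}(S_{A,k}(n))$.

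The starting point is the generating function
$$G_k(x):=\sum_{n=0}^{\infty}S_{A,k}(n)\,x^n=\delta_t^{(k)}F_A(t,x)\big|_{t=-1}.$$
Because $A$ is finite, $F_A(t,x)=\prod_{a\in A}(1-tx^a)^{-1}$ is rational in $(t,x)$, and iterated logarithmic differentiation (the same bookkeeping underlying Lemma~\ref{pqrelations} and Corollary~\ref{relationS}) shows that $\delta_t^{(k)}F_A(t,x)$ has the form $P_k(t,x)/\prod_{a\in A}(1-tx^a)^{k+1}$ for some $P_k\in\Z[t,x]$. Specializing at $t=-1$ gives
$$G_k(x)=\frac{P_k(-1,x)}{\prod_{a\in A}(1+x^a)^{k+1}},$$
a rational function in $x$ whose poles lie among the $(2a)$-th roots of unity for $a\in A$, hence among the $N$-th roots of unity where $N:=\op{lcm}\{2a:a\in A\}$.

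Standard partial-fraction decomposition then supplies complex polynomials $Q_\rho$ (one for each $N$-th root of unity $\rho$) and a threshold $n_0$ such that
$$S_{A,k}(n)=\sum_{\rho^N=1}Q_\rho(n)\,\rho^n\qquad\text{for all }n\geq n_0.$$
Grouping by the residue $r=n\bmod N$, the factor $\rho^n=\rho^r$ is determined by $r$, so there exist real polynomials $\tilde P_0,\ldots,\tilde P_{N-1}\in\R[n]$ with $S_{A,k}(n)=\tilde P_r(n)$ whenever $n\geq n_0$ and $n\equiv r\pmod*{N}$. In other words, $S_{A,k}$ is eventually a quasi-polynomial of period dividing $N$. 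The sign conclusion is then immediate: on each class $r$, either $\tilde P_r\equiv 0$ (so the sign is $0$ for all large $n\equiv r$) or $\tilde P_r$ is a nonzero polynomial whose sign stabilizes once $n$ exceeds its largest real root. Assembling across classes, $(\op{sign}(S_{A,k}(n)))$ is eventually periodic with period dividing $N=\op{lcm}\{2a:a\in A\}$.

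The main obstacle I anticipate is administrative rather than conceptual: carrying out the logarithmic-differentiation bookkeeping cleanly enough to see an explicit bound on the pole multiplicities (and hence on $\deg\tilde P_r$ and on the preperiod $n_0$). The hypothesis $\gcd(A)=1$ plays no essential role here; without it one has $F_A(t,x)=F_{A/d}(t,x^d)$ with $d=\gcd(A)$ and the same argument goes through after rescaling $n\mapsto n/d$. A substantially harder secondary question, parallel to the refinement from ``some period'' to the sharp ``$2a(b-a)$'' in the preceding theorem, is to determine the \emph{exact} period by tracking which cancellations must occur among the contributions from different $\rho$; I would regard this sharpening as the hard heart of the conjecture in its sharpest form.
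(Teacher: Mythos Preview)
The statement is a \emph{conjecture} in the paper, not a proved result, so there is no paper proof to compare against. Your proposal is therefore not a reconstruction of an existing argument but an outright resolution of the question.

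Your reading of the statement is right: as literally worded the conjecture is false, and your example $A=\{1,2\}$, $k=1$ already shows it (one can also take $k=0$: for $A=\{1,3\}$ one has $f_{A,n}(-1)=(-1)^n(\lfloor n/3\rfloor+1)$, which is unbounded). The intended assertion, consistent with the preceding theorem on $A=\{a,b\}$, is eventual periodicity of $(\op{sign}(S_{A,k}(n)))_{n\in\N}$, and this is exactly what you prove.

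The argument is correct. For finite $A$ the generating function $G_k(x)=\delta_t^{(k)}F_A(t,x)\big|_{t=-1}$ is rational in $x$ with poles contained in the zero sets of $1+x^a$, $a\in A$, hence among the $N$-th roots of unity for $N=\op{lcm}\{2a:a\in A\}$; partial fractions then give $S_{A,k}(n)=\sum_{\rho^N=1}Q_\rho(n)\rho^n$ for $n$ large, i.e.\ a quasi-polynomial of period dividing $N$, and the sign of a real polynomial stabilizes past its largest real root. The bound $\prod_{a\in A}(1+x^a)^{k+1}$ on the denominator follows by the obvious induction on $k$ using $\delta_t\bigl((1-tx^a)^{-m}\bigr)=mtx^a(1-tx^a)^{-m-1}$. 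Your remark that $\gcd(A)=1$ is inessential is also correct. The only thing I would tighten in a write-up is to make the integrality/reality of the $\tilde P_r$ explicit (it follows since $S_{A,k}(n)\in\Z$ and the map $\rho\mapsto\bar\rho$ permutes the partial-fraction pieces), so that ``sign of a polynomial'' is unambiguous.

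In short: you have correctly diagnosed the misstatement, supplied the natural correction, and given a clean proof that settles it; this goes beyond anything the paper itself claims.
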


Let us consider the set $A=\{1, 2, 6\}$. One can show that the sequence of signs of $S_{A,k}(n)$ is eventually periodic with period 4 for $k=0, 1$. More precisely,
\begin{align*}
&(-1)^{\left\lfloor \frac{n+1}{2}\right\rfloor}S_{0,A}(n)>0\quad\mbox{for}\quad n\geq 3,\\
&(-1)^{\left\lfloor \frac{n+1}{2}\right\rfloor}S_{1,A}(n)>0\quad\mbox{for}\quad n\geq 8.
\end{align*}
This can be easily proved from the explicit expressions for the sequences $(S_{A,i}(n))_{n\in\N}$ for $i=0, 1$. Indeed, if $i=0$, then $S_{A,0}(n)=f_{A,n}(-1)$ is just the $n$-th coefficient in the power series expansion of the rational function $F_{A}(-1,x)=1/(1+x)(1+x^2)(1+x^6)$. Similarly, if $i=1$ we need to investigate coefficients of the power series expansion of the rational function
$$
\sum_{n=0}^{\infty}S_{A,1}(n)x^{n}=\partial_{t}F(t,x)\mid_{t=-1}=-\frac{x \left(3 x^6+2 x^5-x^4-x^3+x^2+x+1\right)}{(1+x)^2(1+x^2)(1+x^6)^2}.
$$
However, one can observe that $(-1)^{n}S_{A,k}(n)>0$ for $k\geq 2$ and $n\gg 0$. This and related numerical observations suggest the following

\begin{prob}
Let $A\subset \N_{+}$ and suppose that for some $k\in\N$  we have $(-1)^{n}S_{A,k}(n)\geq 0$ for $n \gg 0$. Then for each $i\in\N$ we have $(-1)^{n}S_{A,k+i}(n)\geq 0$ for $n \gg 0$.
\end{prob}

During our investigations we obtained several results concerning periodic behaviour of the sequence of signs of the sequence $(S_{A,k}(n))_{n\in\N}$. It is an interesting question whether we can find a set $A\subset \N_{+}$ such that the sign behaviour of the corresponding sequence (for some $k\in\N$) is not periodic.

\begin{conj}
Let $A=\{3n+1:\;n\in\N\}$. The sequence $(\op{sign}(f_{A,n}(-1)))_{n\in\N}$ is not periodic.
\end{conj}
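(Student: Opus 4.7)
The plan is to extract asymptotic information about $f_{A,n}(-1)$ and use it to rule out eventual periodicity of the signs.

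First, I would analyze the generating function
$$F_A(-1,x)=\prod_{k\geq 0}\frac{1}{1+x^{3k+1}}=\prod_{k\geq 0}\frac{1-x^{3k+1}}{1-x^{6k+2}}.$$
The singularities on $|x|=1$ are the $(3k+1)$-th roots of $-1$, which accumulate densely on the unit circle, so $|x|=1$ is a natural boundary. A direct logarithmic manipulation gives
$$\log F_A(-1,x)=\sum_{j\geq 1}\frac{(-1)^j}{j}\cdot\frac{x^j}{1-x^{3j}}\sim -\frac{\pi^2}{36\,(1-x)}\quad\text{as }x\to 1^-,$$
so $F_A(-1,x)$ decays rapidly at $x=1$, and the dominant contributions to the coefficients must come from secondary singularities on the unit circle rather than from the obvious saddle.

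Second, I would apply a circle-method/saddle-point analysis based on the Dirichlet series $\sum_{k\geq 0}(3k+1)^{-s}=3^{-s}\zeta(s,1/3)$, which governs both the numerator and the denominator exponents in the eta-like product form. Modeled on Meinardus' theorem, but adapted to the vanishing-at-$1$ behaviour above, this should produce a principal asymptotic $|f_{A,n}(-1)|\asymp n^{-\beta}e^{c\sqrt{n}}$ together with oscillatory subdominant corrections of the shape
$$n^{-\beta'}e^{c'\sqrt{n}}\cos(\omega\sqrt{n}+\phi)$$
coming from the next-nearest singularities (notably $x=-1$ and the fourth roots of $-1$), with $c,\beta,c',\beta',\omega,\phi$ expressible through residues and special values of Hurwitz and Dirichlet $L$-functions.

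Third, assuming the leading oscillatory term governs $\op{sign}(f_{A,n}(-1))$ for large $n$, eventual periodicity with period $p$ would force $\cos(\omega\sqrt{n+p}+\phi)$ and $\cos(\omega\sqrt{n}+\phi)$ to share a sign for all large $n$; but $\omega\sqrt{n+p}-\omega\sqrt{n}=\omega p/(\sqrt{n+p}+\sqrt{n})\to 0$, so the relative phase varies continuously and will cross $\pi/2$ infinitely often provided $\omega/\pi$ is not a rational number that conspires with $p$. The aim is then to show that $\omega/\pi$ is irrational, which would make the spacing $\sim 2\pi\sqrt{n}/\omega$ of sign changes grow without bound, contradicting any eventual period.

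The main obstacle is twofold. Analytically, producing a genuine asymptotic expansion past the leading term for a function whose unit circle is a natural boundary with essential-type behaviour at every root of $-1$ is delicate, especially when one needs signs rather than magnitudes; the vanishing of $f_{A,n}(-1)$ at small $n$ such as $n=4,5,6,11$ warns that sign is a fragile quantity and may require control of several subdominant terms simultaneously. Number-theoretically, unconditionally establishing irrationality of $\omega/\pi$ amounts to a transcendence statement about special values of $L$-functions, likely beyond current technology. A fallback avoiding asymptotics would be to exhibit an arithmetic obstruction via the product form above, for instance a congruence along a subsequence of $n$'s forcing a prescribed sign pattern incompatible with any fixed period; such an obstruction would be peculiar to $A=\{3k+1:k\in\N\}$ and might be extracted from $q$-series identities relating the numerator and denominator products.
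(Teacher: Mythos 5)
This statement is posed in the paper as an open conjecture in the final section; the authors give no proof, so there is nothing to compare your argument against. Your proposal must therefore stand on its own, and as written it is a research programme rather than a proof: you acknowledge this yourself, but let me name the gaps concretely.

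First, the entire strategy rests on the unproved assertion that the sign of $f_{A,n}(-1)$ is eventually governed by an oscillatory term $\cos(\omega\sqrt{n}+\phi)$ coming from a pair of complex-conjugate dominant singularities. Your computation that $\log F_{A}(-1,x)\sim -\pi^{2}/(36(1-x))$ as $x\to 1^{-}$ is correct and correctly rules out $x=1$ as the dominant point, but you never determine which points on the natural boundary actually dominate. This is not a detail: if a single real point (say $x=-1$) dominates, the sign sequence would be eventually periodic of period $2$ and the conjecture would be \emph{false}. Meinardus-type theorems do not apply here (they require positivity and a dominant singularity at $x=1$), and running a circle method on a product whose every arc near a root of $-1$ contributes comparably is a genuinely open analytic problem, not an adaptation. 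Second, even granting the asymptotic shape, a sign statement requires the error term to be smaller than the oscillatory main term \emph{at the integers $n$ near the zeros of the cosine}, where the main term is arbitrarily small; your own (correct) observation that $f_{A,n}(-1)=0$ for $n=4,5,6,11$ shows exact cancellation does occur, and controlling this needs at least a second-order expansion that you do not supply. On the other hand, your worry about the irrationality of $\omega/\pi$ is largely unnecessary: once the cosine term genuinely controls the sign, its sign changes occur with gaps growing like $2\pi\sqrt{n}/\omega\to\infty$, and any eventually periodic sign sequence that changes sign infinitely often has bounded gaps between sign changes, so non-periodicity would follow without any transcendence input. The conjecture remains open, and your proposal, while pointing in a sensible direction, does not close it.
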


We expect that the following is true

\begin{conj}
Let $m\in\N_{\geq 2}$ and take $U_{m}=\{i!:\;i=1, \ldots, m\}$. Then, for each $k\in\{0,\ldots, m-2\}$
$$
(-1)^{\left\lfloor \frac{n+1}{2}\right\rfloor}S_{U_{m},k}(n)\geq 0\quad\mbox{for}\quad n\gg 0,
$$
i.e., the sequence of signs of the sequence $(S_{U_{m},k}(n))_{n\in\N}$ is eventually periodic of period 4. Moreover, if $k\geq m-1$ then $(-1)^{n}S_{U_{m},k}(n)\geq 0$ for $n\gg 0$.
\end{conj}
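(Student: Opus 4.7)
The plan is to reduce the question to coefficient asymptotics for a rational generating function. Since $U_m$ is finite, Lemma \ref{delta}(4) identifies
\[
\sum_{n\geq 0}S_{U_m,k}(n)x^n=\delta^{(k)}(F_{U_m}(t,x))\big|_{t=-1},
\]
and the right-hand side is a rational function of $x$ whose poles lie among the zeros of $\prod_{j=1}^m(1+x^{j!})$, all of them roots of unity. The sign of $S_{U_m,k}(n)$ for $n\gg 0$ will be read off from the partial-fraction decomposition by isolating the poles of maximal order and their leading coefficients.

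The key preliminary step is to determine these pole orders. For a root of unity $\omega$ set $I(\omega):=\{j\leq m:\omega^{j!}=-1\}$; expanding $\delta_{t}^{(k)}\prod_{j=1}^m(1-tx^{j!})^{-1}$ via Lemma \ref{delta}(3) and evaluating at $t=-1$ shows that the pole at $\omega$ has order at most $k+|I(\omega)|$, with equality unless there is an unexpected cancellation in the numerator. Writing $\mathbf{i}$ for the imaginary unit: since $(-1)^{j!}=-1$ forces $j=1$, one has $I(-1)=\{1\}$; since $\mathbf{i}^{j!}=-1$ holds iff $j!\equiv 2\pmod{4}$, one has $I(\pm\mathbf{i})=\{2,3\}\cap\{1,\ldots,m\}$; the remaining relevant $\omega$ are primitive $2(j!)$-th roots of unity for $j\geq 3$ and satisfy $|I(\omega)|=1$. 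Consequently, for $m=2$ the three poles $-1,\pm\mathbf{i}$ all have the same order $k+1$, while for $m\geq 3$ the poles at $\pm\mathbf{i}$ have order $k+2$ whereas the pole at $-1$ has only order $k+1$.

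For the first part of the conjecture ($k\leq m-2$ with $m\geq 3$), the $\pm\mathbf{i}$ poles dominate and jointly contribute $\binom{n+k+1}{k+1}$ times a $4$-periodic real function of $n$. The remaining task is to compute the leading partial-fraction coefficient at $x=\mathbf{i}$ in closed form via Corollary \ref{relationS} applied to the polynomials $p_{U_m,j},q_{U_m,j}$ of Lemma \ref{pqrelations}, and to verify that the resulting $4$-periodic sign pattern coincides with $(-1)^{\lfloor(n+1)/2\rfloor}$. The special case $m=2$ requires a separate, direct comparison of residues at the three simple poles $-1,\pm\mathbf{i}$, which is a finite verification.

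The genuine obstacle lies in the ``moreover'' statement for $k\geq m-1$. For $m\geq 3$ the pole-order analysis above predicts that the $\pm\mathbf{i}$ contribution continues to dominate, hence period $4$, so the asserted period-$2$ behaviour can arise only through an exact vanishing of the leading $\pm\mathbf{i}$ coefficient in this range of $k$. I would therefore write that coefficient as an explicit sum over compositions $(k_1,\ldots,k_m)$ of $k$ supported on $\{2,3\}$ via Lemma \ref{delta}(3), substitute the identities for $\delta^{(j)}(p_{U_m,n}(t))|_{t=-1}$ from Corollary \ref{relationS}, and search for an algebraic identity annihilating the sum once $k\geq m-1$. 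The prominence of the threshold $k=m-1$ strongly hints that such an identity exists and reflects relations between $\delta_t\log F_{U_m}$ and its iterated $\delta_t$-derivatives; isolating this identity and proving it is the principal step, and the one I do not presently see how to carry out.
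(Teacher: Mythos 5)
This statement is one of the open conjectures collected in Section 4 of the paper; the authors give no proof of it, so there is no argument of theirs to compare yours against. The question is therefore whether your proposal stands on its own, and it does not: it is a programme rather than a proof. The overall strategy---for finite $A$ the series $\sum_n S_{U_m,k}(n)x^n=\delta^{(k)}(F_{U_m}(t,x))\mid_{t=-1}$ is a rational function of $x$ with all poles at roots of unity, and the eventual sign pattern should be read off from the poles of maximal order---is the natural one, and your computations $I(-1)=\{1\}$ and $I(\pm\mathbf{i})=\{2,3\}\cap\{1,\ldots,m\}$ are correct. But you explicitly leave open the decisive steps in both halves: the closed-form evaluation of the leading partial-fraction coefficient at $x=\mathbf{i}$ together with the verification that it produces the pattern $(-1)^{\lfloor (n+1)/2\rfloor}$, and, for the ``moreover'' clause, the identity that would force that coefficient to vanish (or the pole order to drop) precisely once $k\geq m-1$. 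You say yourself that you do not see how to carry out the latter; with these gaps the conjecture is exactly as open as before.

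There is also a concrete error in your pole classification that would have to be repaired even to complete the first half. You assert that apart from $\pm\mathbf{i}$ every relevant $\omega$ satisfies $|I(\omega)|=1$. This fails for $m\geq 5$: if $\omega$ is a primitive $48$-th root of unity then $\omega^{4!}=\omega^{24}=-1$ and $\omega^{5!}=(\omega^{24})^{5}=-1$, so $I(\omega)=\{4,5\}$ and the pole at $\omega$ has the same maximal order $k+2$ as the poles at $\pm\mathbf{i}$; more generally this happens for every pair $(j,j+1)$ with $j$ even, since $(j+1)!/j!=j+1$ is then odd. All of these poles lie on the unit circle, so their contributions grow at the same rate $n^{k+1}$ as the $\pm\mathbf{i}$ contribution, and the claimed period-$4$ sign pattern can survive only if one additionally proves that the sum of these extra contributions never overturns the sign dictated by $\pm\mathbf{i}$---a quantitative comparison of leading residues that your sketch does not address at all.
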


If the above conjecture is true then clearly implies that if $A=\{i!:\;i\in\N_{+}\}$, then for each $k\in\N$, the sequence $(\op{sign}(S_{A,0}(n)))_{n\in\N}$ is eventually periodic of period 4.

\begin{prob}
Explain the behaviour of $(\op{sign}(S_{A,k}(n))_{n\in\N}$ for $A=\{m+1,\ldots, m+k\}$, where $k$ is fixed.
\end{prob}

Numerical calculations suggest that for $k\geq 3$ the behaviour is alternating.

One can also go one step further and note that $S_{A,k}(n)$ is just the value of the polynomial $\delta^{(k)}(f_{A,n}(t))$ at the simplest root of unity $t=-1$. This suggest to investigate the following general

\begin{prob}\label{rootsofunity}
Let $d\in\N_{\geq 3}$ and $\zeta_{d}$ be the $d$-th primitive root of unity. Let us write
$$
\delta^{(k)}(f_{A,n}(t))\mid_{t=\zeta_{d}}=\sum_{i=0}^{\phi(d)}u^{(d)}_{A,k}(i,n)\zeta_{d}^{i}.
$$
What can be said about the sign behaviour of the family of sequences $(u^{(d)}_{A,k}(i,n))_{n\in\N}$, where $k\in\N$ and  $i=0, 1, \ldots, n-1$ are fixed? Especially interesting is the case of $k=0$.
\end{prob}

Numerical calculations suggest that we can not expect periodic behaviour even in the basic case when $A=\N_{+}$. In fact, based on our calculations we expect that there are arbitrary long sequences of consecutive positive and negative values of $u^{(4)}_{A,0}(i,n)$ for $t=\zeta_{4}$, i.e., $\zeta_{4}^2+1=0$, $i=0, 1$. On the other hand, if $A=\{2^{i}:\;i\in\N\}$ then in \cite[Corollary 4.2]{UZ} the authors proved, among other things, that
$$
\sum_{i=0}^{4n+2}(-1)^{i}u^{(4)}_{A,0}(2i,8n+4)=\sum_{i=0}^{4n+1}(-1)^{i}u^{(4)}_{A,0}(2i+1,8n+4)=0,
$$
which is a consequence of the vanishing of the value at $t=\zeta_{4}$ of $8n+4$-th $A$-partition polynomial. Thus, a deeper study of Problem \ref{rootsofunity} may reveals interesting combinatorial identities.

\vskip 1cm

\noindent Maciej Ulas, Jagiellonian University, Faculty of Mathematics and Computer Science, Institute of
Mathematics, {\L}ojasiewicza 6, 30-348 Krak\'ow, Poland; email:
maciej.ulas@uj.edu.pl

\noindent Filip Gawron, Jagiellonian University, Faculty of Mathematics and Computer Science, Institute of
Mathematics, {\L}ojasiewicza 6, 30-348 Krak\'ow, Poland; email: filipux1@gmail.com


\begin{thebibliography}{100}
\bibitem{And0} G. E. Andrews, The Theory of Partitions, Cambridge University Press, Cambridge, 1998.
\bibitem{Mer} M. Merca, {\it On the sum of parts in the partitions of $n$ into distinct parts}, Bull. Aust. Math. Soc. 104, No. 2 (2021), 228--237.
\bibitem{UZ} M. Ulas, B. \.{Z}mija, {\it On arithmetic properties of binary partition polynomials}, Adv. Appl. Math. 110 (2019), 153--179.
\end{thebibliography}
\end{document}